\title{Non-archimedean Sendov's Conjecture}
\author{Daebeom Choi and Seewoo Lee}
\newtheorem{conjecture}{Conjecture}
\newtheorem{theorem}{Theorem}
\newtheorem{proposition}{Proposition}
\theoremstyle{remark}
\newtheorem*{remark}{Remark}
\theoremstyle{definition}
\newtheorem{example}{Example}
\DeclareMathOperator{\Qq}{\mathbb{Q}}
\DeclareMathOperator{\Dd}{\mathbb{D}}
\DeclareMathOperator{\CC}{\mathbb{C}}
\begin{document}

\maketitle

\begin{abstract}
We prove non-archimedean analogue of Sendov's conjecure. We also provide complete list of polynomials over an algebraically closed non-archimedean field $K$ that satisfy the optimal bound in the Sendov's conjecture. 
\end{abstract}

\noindent

\section{Introduction}

Sendov's conjecture \cite{sendov} can be stated as follows:

\begin{conjecture}[Sendov]
Let $f(z) = \prod_{k=1}^{n} (z-z_{k})$ be a monic polynomial over $\CC$ where all zeros are in the closed unit disk centered at zero, i.e. $|z_{k}| \leq 1$ for all $1\leq k\leq n$. Then, for each $k$, we can find a zero $w$ of $f'(z)$ such that $|w - z_{k}|\leq 1$. 
\end{conjecture}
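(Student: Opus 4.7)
My plan is to argue by contradiction, following the normalizations used in prior partial results. After a rotation I may take $z_k = a \in [0,1]$, and suppose every critical point $w_1, \ldots, w_{n-1}$ of $f$ satisfies $|w_j - a| > 1$. By Gauss--Lucas each $w_j$ also lies in the closed unit disk, so each $w_j$ is trapped in the lens
\[
L_a \;=\; \{w \in \CC : |w| \leq 1,\ |w - a| > 1\},
\]
which is contained in the closed half-plane $\operatorname{Re}(w) \leq a/2$ whenever $a > 0$.

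The first concrete computational step is the ``value at $a$'' identity. Writing $f(z) = (z-a)g(z)$ with $g(z) = \prod_{i \neq k}(z - z_i)$ gives $f'(a) = g(a)$; comparing with $f'(z) = n \prod_j (z - w_j)$ yields
\[
n \prod_{j=1}^{n-1} |a - w_j| \;=\; \prod_{i \neq k} |a - z_i|.
\]
Under the contradiction hypothesis the left side exceeds $n$, while the triangle inequality bounds the right side by $(1+a)^{n-1} \leq 2^{n-1}$, so no contradiction appears yet. To close the gap I would iterate with two extra inputs: the logarithmic derivative equations $\sum_i (w_j - z_i)^{-1} = 0$ at each critical point, and the barycentre identity $\sum_j w_j = \frac{n-1}{n}\sum_i z_i$ obtained by comparing Vieta formulas for $f$ and $f'$. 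Trapping every $w_j$ in $L_a$ forces the $z_i$ to cluster on the far side of the disk near $-1$ (otherwise some $(w_j - z_i)^{-1}$ sum cannot vanish), and the barycentre identity then pushes the $w_j$ towards $-(n-1)/n$, shrinking the effective location of the $w_j$ inside $L_a$ until one of the factors $|a - w_j|$ is forced below $1$, contradicting the product identity.

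The main obstacle --- and essentially all of the difficulty --- is making this ``squeezing'' rigorous and quantitative. The boundary case $|a| = 1$ is Rubinstein's classical theorem, and the case $a = 0$ is immediate from Gauss--Lucas, but the interior range $0 < a < 1$ is precisely where the classical Sendov conjecture has resisted proof since 1958. A genuinely new analytic input beyond Gauss--Lucas and Vieta appears to be required to turn the qualitative clustering above into an inequality strong enough to beat the $(1+a)^{n-1}$ triangle bound; absent such an input, the strategy only closes for small $n$ by direct verification and in the asymptotic regime handled by the recent work of Tao. I would therefore expect to have to supplement the plan above either with a compactness/extremization reduction to finitely many combinatorial root configurations on $\partial\Dd$, or with an integral-geometric estimate on $L_a$ that improves on the triangle inequality in the presence of the critical-point equations.
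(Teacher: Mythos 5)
This statement is the original Sendov conjecture over $\CC$, which the paper presents as a \emph{conjecture} and explicitly describes as open except for degree at most $8$ and sufficiently large degree; the paper contains no proof of it, and neither does your proposal. You are candid about this yourself, but to be concrete about where the argument fails: the only rigorous step is the identity $n\prod_j|a-w_j|=\prod_{i\neq k}|a-z_i|$, and under the contradiction hypothesis this pins the left side above $n$ while the triangle inequality only caps the right side at $(1+a)^{n-1}$, which is far larger. Everything after that --- the clustering of the $z_i$ near $-1$ forced by $\sum_i(w_j-z_i)^{-1}=0$, the barycentre identity pushing the $w_j$ leftward, and the claimed eventual violation of $|a-w_j|>1$ --- is qualitative language with no inequality attached; no quantitative version of this squeezing is known that beats the exponential gap for all $n$ and all $a\in(0,1)$, and producing one would constitute a solution of the conjecture itself. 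So the proposal is a research program, not a proof, and it has an irreducible gap at precisely the point you identify.

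It is worth noting how your first step relates to what the paper actually proves. The paper's Theorem 1 (the non-archimedean analogue) is proved by exactly your ``value at the root'' computation: comparing $|f'(z_1)|=|n|\prod_j|z_1-w_j|$ with $|f'(z_1)|=\prod_{i\geq 2}|z_1-z_i|$. The reason the argument closes there and not over $\CC$ is that the ultrametric inequality replaces your bound $\prod_{i\neq k}|a-z_i|\leq(1+a)^{n-1}$ by $\prod_{i\geq 2}\max\{|z_1|,|z_i|\}\leq 1$, which is strong enough to contradict $|n|\,r_n^{n-1}>1$ immediately. Over an archimedean field no such upgrade is available, which is precisely why the complex statement remains a conjecture while its non-archimedean counterpart is a two-line theorem.
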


The conjecture is known to be true for polynomials of degree $\leq 8$ \cite{degeight} and  polynomials of sufficiently large degree \cite{suflarge}, but still widely open in general.
In this paper, we study the anlogue of the Sendov's conjecture over \emph{non-archimedean} fields\footnote{We only consider characteristic 0 fields.}.
By enlarging the radius of the discs appear in the conjecture (centered at each zero $z_k$), we show that the Sendov's conjecture holds for an algebraically closed non-archimedean field $K$. 
The theorem reduces to the direct analogue of the Sendov's conjecture for polynomials of degree $n$ with norm $|n| = 1$.
At last, we give the necessary and sufficient conditions for the polynomials to meet the tight bound of the theorem.

\section{The Non-Archimedean Sendov's Conjecture.}

We first prove the following analogue of Sendov's conjecture, which has a weaker bound on the radius than the original conjecture over complex numbers. From now, let $(K, |\cdot |)$ be an algebraically closed non-archimedian field, $\mathcal{O}_K=\{x\in K\,:\, |x|\le 1 \}$ be its valuation ring, and $\mathfrak{m}_K=\{x\in K\,:\, |x|< 1 \}$ be its maximal ideal.

\begin{theorem}
Let let $r = r_n = |n|^{-1/(n-1)}$. Then the following variation of Sendov's conjecture holds for all $f(z) \in K[z]$: assume that all zeros $z_{i}$ of $f(z)$ are in the closed unit disk centered at zero. 
Then for each $z_i$, the closed disk $\overline{\Dd}(z_i, r_n):= \{w \in K\,:\, | w - z_i| \leq r_n\}$ contains at least one zero of $f'(z)$. 
\end{theorem}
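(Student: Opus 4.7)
The plan is to compare two expressions for $f'(z_i)$ and exploit the ultrametric inequality.

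Write $f(z)=\prod_{k=1}^{n}(z-z_k)$ and, since $\mathrm{char}\,K=0$ and $f$ is monic of degree $n$, factor the derivative as $f'(z)=n\prod_{j=1}^{n-1}(z-w_j)$, where the $w_j\in K$ are the critical points of $f$. Evaluating at a fixed root $z_i$ and using the product rule gives the identity
\[
n\prod_{j=1}^{n-1}(z_i-w_j)\;=\;f'(z_i)\;=\;\prod_{k\neq i}(z_i-z_k).
\]
First I would take absolute values of the right-hand side. By the non-archimedean (strong) triangle inequality, $|z_i-z_k|\le\max(|z_i|,|z_k|)\le 1$ for every $k\neq i$, since each zero lies in $\mathcal{O}_K$. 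Hence the right-hand side has norm at most $1$, which gives the key estimate
\[
|n|\,\prod_{j=1}^{n-1}|z_i-w_j|\;\le\;1,\qquad\text{i.e.}\qquad \prod_{j=1}^{n-1}|z_i-w_j|\;\le\;|n|^{-1}.
\]

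Next I would apply a pigeonhole argument to the $n-1$ factors: if every one of them exceeded $|n|^{-1/(n-1)}=r_n$, the product would exceed $|n|^{-1}$, a contradiction. Therefore at least one index $j$ satisfies $|z_i-w_j|\le r_n$, which places a critical point of $f$ inside $\overline{\Dd}(z_i,r_n)$, as required.

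The main ``obstacle'' is really just making sure that the ingredients are in place: $f'$ must genuinely have $n-1$ roots in $K$ (guaranteed by algebraic closure) and the leading coefficient $n$ must be nonzero in $K$ (guaranteed by characteristic zero, and in fact $|n|$ may be strictly less than $1$, which is precisely what makes $r_n\ge 1$ and forces the enlargement of the disks compared to the archimedean statement). Everything else is a one-line application of the ultrametric inequality, so no further case analysis is needed for this theorem; the delicate work of this paper will appear when one characterises exactly when the bound $r_n$ is attained.
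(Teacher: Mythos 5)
Your proof is correct and follows essentially the same strategy as the paper: equate the two expressions $n\prod_j(z_i-w_j)$ and $\prod_{k\neq i}(z_i-z_k)$ for $f'(z_i)$, bound the latter by $1$ via the ultrametric inequality, and conclude by a pigeonhole on the $n-1$ factors. The only difference is cosmetic --- you argue directly while the paper phrases the same computation as a proof by contradiction.
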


\begin{proof}
Let $f(z) = \prod_{i=1}^{n} (z-z_{i}) \in K[z]$ with $|z_{i}| \leq 1$. 
Let $\{w_1, \dots, w_{n-1}\}$ be the zeros of $f'(z)$, so that $f'(z) = n\prod_{i=1}^{n-1} (z - w_{i})$.  
Assume that Sendov's conjecture is false for $f(z)$. 
Then, without loss of generality, we have $|z_{1} - w_{i}| > r_{n}$ for all $1\leq i\leq n-1$. 
This gives 
$$
f'(z) = n\prod_{i=1}^{n-1} (z-w_{i}) \Rightarrow |f'(z_{1})| = |n| \prod_{i=1}^{n-1} |z_{1} - w_{i}|  = |n|r_{n}^{n-1} >  1. 
$$
However, since $z_1$ is a zero of $f(z)$, we also have 
$$
f'(z) = \sum_{i=1}^{n} \prod_{j\neq i} (z - z_{j}) \Rightarrow |f'(z_1)| = \prod_{i=2}^{n} |z_{1} - z_{i}| \leq \prod_{i=2}^{n} \max\{|z_{1}|, |z_{i}|\} \leq 1,
$$
which gives a contradiction.
\end{proof}
\begin{remark}
Note that since $r_n\ge 1$ and $|z_i|=1$, $\overline{\Dd}(z_i, r_n)=\overline{\Dd}(0, r_n)$ for every $i$. Hence, every closed disks occur at the end of Theorem 1 are in fact all same. Therefore, the Theorem 1 is equivalent to $|w|\le r_n$ for some zero $w$ of $f'$.
\end{remark}
For example, when $K =\overline{\mathbb{Q}_{p}}$ with $p\nmid n$, we have $|n| = |n|_{p} = 1$ and so $r_{n} = 1$. 
In this case, the direct analogue of Sendov's conjecture is actually true.
However, when $n$ is a multiple of $p$, there is a counterexample for the original version.

\begin{proposition}
Let $p$ be an integer prime and $n$ be a multiple of $p$. 
Then $f_{n}(z) = z^{n} - z$ is a counterexample for the original analogue of Sendov's conjecture over $K = \overline{\Qq_p}$. 
More precisely, for all zeros $z_i$ of $f_{n}$, the closed unit disc $\mathbb{D}(z_{i}, 1) = \{w \in K\,:\, |w - z_{i} |\leq 1\}$ does not contain any zero of $f'_{n}(z)$.
\end{proposition}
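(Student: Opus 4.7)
The plan is to exhibit the zeros of $f_n$ and $f_n'$ explicitly, then apply the ultrametric inequality. Factor $f_n(z) = z(z^{n-1}-1)$ so that the zeros of $f_n$ are $0$ together with the $(n-1)$-st roots of unity in $K$; each root $z_i$ satisfies $|z_i|\le 1$, so the hypothesis of Sendov's conjecture is met. Next compute $f_n'(z) = nz^{n-1} - 1$, whose zeros $w$ satisfy $w^{n-1} = 1/n$.

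Since $p\mid n$ we have $|n|=|n|_p < 1$, and therefore every zero $w$ of $f_n'$ has absolute value
\[
|w| = |n|^{-1/(n-1)} > 1.
\]
Now the ultrametric inequality gives the key step: for any zero $z_i$ of $f_n$, because $|z_i|\le 1 < |w|$, the two terms in $w - z_i$ have distinct absolute values, so
\[
|w - z_i| = \max\{|w|,|z_i|\} = |w| > 1.
\]
Hence no zero of $f_n'$ lies in the closed unit disc around any $z_i$, which is exactly the statement of the proposition.

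There is really no obstacle here beyond the elementary computation; the only subtlety worth flagging is that one must use the strong (non-archimedean) triangle inequality in the form $|a-b|=\max\{|a|,|b|\}$ when $|a|\neq|b|$, to upgrade the expected bound $|w-z_i|\le |w|$ to an equality, so the violation holds for every zero $z_i$ simultaneously. Note also that this example is sharp in the sense of Theorem~1: the zeros of $f_n'$ lie on the circle of radius $r_n = |n|^{-1/(n-1)}$, matching the enlarged disc in the theorem exactly.
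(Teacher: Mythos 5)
Your proof is correct and follows essentially the same route as the paper: identify the zeros of $f_n$ and $f_n'$ explicitly, observe $|w| = |n|^{-1/(n-1)} > 1$ for every critical point $w$, and invoke the ultrametric equality $|w-z_i|=\max\{|w|,|z_i|\}$ when the absolute values differ. The only difference is cosmetic: the paper first factors $z_i-w_j$ to isolate $|n|_p^{-1/(n-1)}$ and then applies the ultrametric inequality to the remaining factor $n^{1/(n-1)}\zeta_{n-1}^{i-j}-1$, whereas you apply the ultrametric inequality directly to $w-z_i$, which is slightly cleaner and handles the root $z=0$ without a separate case.
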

\begin{proof}
Let $\zeta_{n-1}$ be $(n-1)$-th root of unity in $\overline{\Qq_{p}}$. 
Then the roots of $f_{n}(z)$ are 
$$z_j= \zeta_{n-1}^{j} \,\,(1\leq j\leq n-1), \quad z_{n} =0,$$
where the roots of $f_{n}'(z) = nz^{n-1} -1$ are 
$$w_j = n^{-1/(n-1)} \zeta_{n-1}^{j}\,\,(1\leq j\leq n-1). $$
Then we have $|z_{i} - w_{j}|_{p} > 1$ for all $1\leq i \leq n$ and $1\leq j \leq n-1$. 
In fact, we have
$$
|z_{i} - w_{j}|_p = |\zeta_{n-1}^{i} - n^{-1/(n-1)} \zeta_{n-1}^{j}|_{p} = |n|_{p}^{-1/(n-1)} \cdot |n^{1/(n-1)} \zeta_{n-1}^{i-j} - 1|_p.
$$
for $1\leq i \leq n-1$.
Since $n$ is a multiple of $p$, we have $|n|_p < 1$ and so $|n^{1/(n-1)}\zeta_{n-1}^{i-j}|_{p} = |n|_{p}^{1/(n-1)} < 1$, which gives 
$$
|z_{i} - w_{j}|_p = |n|_{p}^{-1/(n-1)} > 1. 
$$
Note that $|\alpha + \beta|_p = \max\{|\alpha|_{p}, |\beta|_p\}$ whenever $|\alpha|_p \neq |\beta|_p$. 
Similarly, for $z_n = 0$ we have
$$
|z_{n} - w_{j}|_{p} = |w_{j}|_{p} = |n|_{p}^{-1/(n-1)} > 1.
$$
\end{proof}

\section{Optimality criteria}
The counterexample given in the Proposition 1 actually satisfies the optimal bound of the Theorem 1. 
In other words, the zero $w_j$ of $f'(z)$ contained in the closed disk $\overline{\mathbb{D}}(z_i, r_n)$ actually lies in the boundary of the disk, i.e. $|z_i - w_j| = r_n$. 
In this section, we give simple criteria to determine whether a given polynomial satisfies the optimal bound of the Theorem 1. 
Let $f(z)  = z^{n} + a_{n-1}z^{n-1} + \cdots + a_{1}z + a_{0}\in K[z]$ be a polynomial where all the zeros of $f$ lies in a closed unit disk centered at origin. Note that this is equivalent to $a_i\in \mathcal{O}_K$ for every $0\le i\le n-1$ since $\mathcal{O}_K$ is integrally closed.
Define $I(f)$ as
$$
I(f) = \max_{z: f(z) = 0}\min_{w: f'(w) = 0} |z - w|. 
$$
By the Theorem 1, $I(f)\le r_n$. We will call $f(z)$ \emph{satisfies the optimal bound of the Theorem 1} if $I(f)=r_n$.
The following theorem gives necessary and sufficient conditions that a given polynomial over $K$ satisfy the optimal bound of the Theorem 1 when degree $n$ has norm strictly smaller than 1, i.e. $v(n) > 0$.

\begin{theorem}
\label{opt_poly_1}
Assume that $n=\deg f$ satisfies $|n| < 1$. Then $f(z)$ satisfies the optimal bound of the Theorem 1 if and only if
\begin{align}
\label{opt_cond}
v(a_{j}) \geq\max\left\{0, \frac{j-1}{n-1}v(n) - v(j)\right\}\,\,(2\leq j\leq n), \, v(a_1) = 0, \, v(a_{0}) \geq 0,
\end{align}
where $v = v_{K}: K \to \mathbb{R} \cup \{\infty\}$ is a valuation corresponds to $|\cdot |$.
\end{theorem}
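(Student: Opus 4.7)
The plan is to translate the optimality condition into a condition on the Newton polygon of $f'$, exploiting that $|n|<1$ forces $r_n>1$. Concretely, the first step is to prove
\[
I(f)=r_n \;\Longleftrightarrow\; |w|=r_n \text{ for every zero } w \text{ of } f'.
\]
For the forward direction, pick a zero $z_1$ of $f$ realising $\min_j|z_1-w_j|\geq r_n$; using $|f'(z_1)|=|n|\prod_j|z_1-w_j|$ together with the upper bound $|f'(z_1)|=\prod_{i\neq 1}|z_1-z_i|\leq 1$ from the proof of Theorem~1, one gets $\prod_j|z_1-w_j|\leq r_n^{n-1}$. Combined with each factor being $\geq r_n$, every $|z_1-w_j|$ must equal $r_n$, and then the ultrametric inequality with $|z_1|\leq 1 < r_n$ forces $|w_j|=r_n$. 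The converse is immediate: if all $|w_j|=r_n$, then for any zero $z$ of $f$ one has $|z-w_j|=\max(|z|,|w_j|)=r_n$, whence $I(f)=r_n$.

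Next, I would interpret ``all zeros of $f'$ have absolute value $r_n$'' through the Newton polygon of $f'(z)=\sum_{j=1}^{n} j a_j z^{j-1}$. The rightmost candidate vertex is always $(n-1,v(n))$ since $n\neq 0$ in $K$. The roots of $f'$ all have valuation $-v(n)/(n-1)$ precisely when the Newton polygon is the single segment from $(0,v(a_1))$ to $(n-1,v(n))$ of slope $v(n)/(n-1)$. Matching this slope forces $v(a_1)=0$, and requiring that every intermediate point $(j-1,v(j a_j))$ for $2\leq j\leq n-1$ lie on or above the line $y=\tfrac{v(n)}{n-1}\,x$ gives
\[
v(j)+v(a_j)\geq \frac{j-1}{n-1}v(n),
\]
equivalently $v(a_j)\geq \tfrac{j-1}{n-1}v(n)-v(j)$. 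Combined with the integrality $v(a_j)\geq 0$ for $0\leq j\leq n-1$ (already guaranteed by the zeros lying in the closed unit disk), this recovers exactly the criterion stated in the theorem.

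The main obstacle is the first step: one must carefully combine the upper bound $|f'(z_1)|\leq 1$ with the factorisation $|f'(z_1)|=|n|\prod_j|z_1-w_j|$ to pin each factor to $r_n$, and then exploit the strict inequality $|z_1|<r_n$ (which is precisely where $|n|<1$ enters) to pass from $|z_1-w_j|=r_n$ to $|w_j|=r_n$. Once this reduction is achieved, the equivalence between ``all $|w_j|=r_n$'' and the single-segment Newton polygon --- and hence the stated coefficient inequalities --- follows by the standard non-archimedean Newton polygon dictionary, with only a minor verification that $a_1\neq 0$ so that the left endpoint genuinely sits at $(0,v(a_1))$.
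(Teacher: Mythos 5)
Your proposal is correct and follows essentially the same path as the paper's first proof: you reduce $I(f)=r_n$ to the condition that every critical point $w_j$ satisfies $|w_j|=r_n$ (via the same product/ultrametric argument), and then translate this into the Newton polygon of $f'$ being a single segment of slope $v(n)/(n-1)$, yielding exactly the coefficient inequalities. The only cosmetic difference is that you isolate the intermediate equivalence $I(f)=r_n \Leftrightarrow \text{all }|w_j|=r_n$ as a standalone lemma, in the spirit of the paper's second proof's opening reduction, whereas the paper's first proof builds it into the necessity/sufficiency directions.
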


We give two different proofs of the theorem. First proof uses the following property about Newton polygon.


\begin{proposition}
Let $K$ be a Henselian valued field with valuation $v = v_{K}$, and let $p(z) = a_{n}z^{n} + a_{n-1}z^{n-1} + \cdots + a_{1}z + a_{0} \in K[z]$.
Define $N(f)$, the Newton polygon of $f$, as a lower convex hull of the set of points $\{(i, v(a_i))\,:\, 0\leq i\leq n\}$. 
Let $\mu_1, \dots, \mu_r$ be the slopes of the line segment of $N(f)$ arranged in increasing order, and let $\lambda_1, \dots, \lambda_r$ be the corresponding lengths of the line segments projected onto the $x$-axis. 
Then for all $1\leq k\leq r$, $f(z)$ has exactly $\lambda_k$ roots of valuation $-\mu_k$.
\end{proposition}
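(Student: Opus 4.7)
The plan is to prove the proposition in two stages. First, I would establish a \emph{slope factorization} $f = g_1 g_2 \cdots g_r$ in $K[z]$, where each factor $g_k$ has Newton polygon equal to a single segment of slope $\mu_k$ and horizontal length $\lambda_k$. Second, I would show that for every such single-slope factor $g_k$, each of its roots in a fixed algebraic closure $\overline{K}$ has valuation exactly $-\mu_k$. The proposition then follows by counting, since $\sum_k \lambda_k = \deg f$ and each $g_k$ contributes $\lambda_k = \deg g_k$ roots.

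The single-slope step is elementary. Suppose $g(z) = b_\lambda z^\lambda + \cdots + b_0$ has Newton polygon equal to the single segment from $(0, v(b_0))$ to $(\lambda, v(b_\lambda))$ with slope $\mu$, so that $v(b_i) \geq v(b_0) + i\mu$ for every $i$. For a root $\alpha \in \overline{K}$, set $\nu = v(\alpha)$ and examine $v(b_i \alpha^i) = v(b_i) + i\nu \geq v(b_0) + i(\mu + \nu)$. If $\nu > -\mu$, then for every $i \geq 1$ the inequality is strict and $v(b_i \alpha^i) > v(b_0 \alpha^0)$, so $\sum_i b_i \alpha^i$ has a unique minimal-valuation summand and cannot vanish. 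A symmetric estimate (comparing each $i < \lambda$ against $i = \lambda$) rules out $\nu < -\mu$. Hence $\nu = -\mu$, as required.

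The factorization step is the technical heart, and I would induct on the number of distinct slopes, peeling off the leftmost segment each time. After rescaling $z \mapsto cz$ with $v(c)$ chosen so that the break between the first and second slopes lies on the $x$-axis (and normalizing $v(a_0) = 0$), the resulting polynomial reduces modulo $\mathfrak{m}_K$ to a polynomial of degree exactly $\lambda_1$ in the residue field whose leading and constant coefficients are both nonzero units. A Weierstrass-type form of Hensel's lemma then lifts this to a factorization in $K[z]$ with the first slope peeled off, after which iteration produces the full decomposition.

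The main obstacle is the Hensel lifting, and in particular ensuring that the required scaling element $c$ exists in $K$ rather than only in an extension. When the value group of $K$ is divisible (for instance, when $K$ is algebraically closed, the setting used elsewhere in the paper), this is automatic; in general one passes to a finite extension, applies the lemma there, and then descends using the uniqueness of the slope factorization up to reordering, together with Galois invariance. With that technicality dispatched, the remainder amounts to routine bookkeeping with slopes, lengths, and convex hulls.
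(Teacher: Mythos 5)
The paper does not prove this proposition at all; it simply cites Neukirch's \emph{Algebraic Number Theory} and moves on, so there is no in-paper argument to compare against. Your two-stage outline---(i) slope-factorize $f = g_1\cdots g_r$ with each $g_k$ a pure-slope factor, (ii) show every root of a pure-slope polynomial has valuation equal to minus that slope---is exactly the standard proof, and is what Neukirch's cited argument amounts to. Step (ii) is stated correctly: for a root $\alpha$ with $v(\alpha)=\nu$, comparing $v(b_i\alpha^i)$ against the $i=0$ term kills $\nu>-\mu$, and comparing against the $i=\lambda$ term kills $\nu<-\mu$, using the ultrametric uniqueness-of-minimum principle both times. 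Step (i) is also correct in outline: after the shear $z\mapsto cz$ with $v(c)=-\mu_1$ and scaling so the first vertex sits at the origin, the reduction mod $\mathfrak{m}_K$ has degree exactly $\lambda_1$ with nonvanishing constant term, and the ``degree-dropping'' form of Hensel's lemma over the Henselian ring $\mathcal{O}_K$ splits off a monic factor of degree $\lambda_1$.

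Two remarks rather than gaps. First, your closing worry about descent from a finite extension is unnecessary for the proposition as stated: the conclusion is purely a count of roots in a fixed $\overline{K}$, which is insensitive to base extension, so one may simply run the whole argument over $\overline{K}$ (algebraically closed, Henselian, divisible value group) and never worry about whether $c$ or the factors live over $K$. The Henselian hypothesis and the factorization over $K$ itself give more than the proposition claims. Second, once one is willing to work over $\overline{K}$ there is an even shorter route that bypasses Hensel altogether: write $f=a_n\prod_i(z-\alpha_i)$, group the $\alpha_i$ by valuation, observe that each group $\prod_{v(\alpha_i)=s}(z-\alpha_i)$ has Newton polygon a single segment of slope $-s$ and horizontal length equal to the group size (a direct elementary-symmetric-function estimate), and appeal to additivity of Newton polygons under multiplication. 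Your factorization step implicitly uses that same additivity when passing from the polygon of $f$ to that of $f/g_1$, so it is worth flagging it as a lemma in its own right.
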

See \cite{neukirch} for the proof of Proposition 2. 
\begin{proof}[First proof of Theorem \ref{opt_poly_1}]
First, we show that the condition is necessary. 
Let's write $f(z)$ as $f(z) = \prod_{i=1}^{n} (z-z_{i})$ and $f'(z) = n\prod_{j=1}^{n-1}(z - w_{j})$.
Without loss of generality, assume that $I(f) = |n|^{-1/(n-1)} = |z_{1} - w_{1}|$, $|z_{1} - w_{j}| \geq |n|^{-1/(n-1)}$ and $\min_{1\leq j \leq n-1} |z_{i} - w_{j}| \leq |n|^{-1/(n-1)}$ for all $i \neq 1$. 
We have
\begin{align*}
1 &\leq |n| \prod_{1 \leq j \leq n-1}|z_{1} - w_{j}| \\
&= |f'(z_{1})| \\
&= \left| \prod_{2\leq i \leq n}  (z_{1}-z_{i})\right| \\
&\leq \prod_{2\leq i \leq n} \max\{|z_{1}|, |z_{i}|\} \leq 1,
\end{align*}
and all the inequalities should be equality, so $|z_{1} - w_{j}| = |n|^{-1/(n-1)}$ for $1\leq j \leq n-1$. 
Since $|z_{1}| \leq 1$ and $|n|^{-1/(n-1)} > 1$, we get $|w_{j}| = |n|^{-1/(n-1)}$, i.e. $v(w_{j}) = -\frac{1}{n-1}v(n)$. 
From $w_{1}\cdots w_{n-1} = (-1)^{n-1}\frac{a_1}{n}$, 
$$
-v(n) = \sum_{1\leq j \leq n-1} v(w_j) = v(a_1) - v(n)
$$
and so $v(a_1) = 0$.
Now, consider the Newton polygon $N(f')$ of $f'(z) = nz^{n-1} + (n-1)a_{n-1}z^{n-2} + \cdots + 2a_{2}z + a_{1}$, which is a convex hull of $(j-1, v(j) + v(a_{j}))$ for $1\leq j \leq n$.
The slope of the segment that connects $(0, v(a_1)) = (0, 0)$ and $(n-1, v(n))$ is $\frac{v(n)}{n-1}$, which equals to $-v(w_j)$ for all $j$.
So the Newton polygon itself become the segment $\{(t, \frac{v(n)}{n-1}t)\,:\, 0\leq t \leq n-1\}$, and all other points should be located above this line, which is equivalent to  
$$
\frac{v(j) + v(a_{j})}{j-1} \geq \frac{v(n)}{n-1}\Leftrightarrow v(a_{j}) \geq \frac{j-1}{n-1}v(n) - v(j).
$$
For the Newton polygon of $f(z)$, since all zeros lie in the unit disk, $v(z_{i})\geq 0$ for all $1\leq i \leq n$. 
This means that the slopes of the line segments of the Newton polygon $N(f)$ is not positive.
Since $v(a_n) = 0$, all the points $(i, v(a_i))$ should lie above $x$-axis, i.e. $v(a_i) \geq 0$ for all $i$. Hence we get \eqref{opt_cond}.

Conversely, suppose that the coefficients of $f(z) = z^{n} + a_{n-1}z^{n-1} + \cdots + a_{1}z + a_{0} \in K[z]$ satisfy \eqref{opt_cond}.
Then the Newton polygon $N(f')$ is the line segment $\{(f, \frac{v(n)}{n-1}t\,:\, 0\leq t\leq n-1\}$ and all the zeros $w_1, \dots, w_{n-1}$ of $f'(z)$ have valuation $-\frac{v(n)}{n-1}$, i.e. $|w_j| = |n|^{-1/(n-1)}$. 
From $|z_i| \leq 1$, we have $|z_i - w_j| = |n|^{-1/(n-1)}$ for $1\leq i \leq n$, $1\leq j \leq n-1$ and so $I(f) = |n|^{-1/(n-1)}$.
\end{proof}
\begin{proof}[Second proof of Theorem \ref{opt_poly_1}]
It is enough to show that $f(z)$ satisfies the optimal bound of the Theorem 1 if and only if 
\begin{align}
\label{opt_cond_mult}
|a_j|\le \min\left\{1, \left|\frac{n^{\frac{j-1}{n-1}}}{j}  \right| \right\}\,\,(2\leq j\leq n), \, |a_1|=1, \, |a_0|\le 1.
\end{align}
As we noted above, $|z_i|\le 1$ for all $1\le i\le n$ if and only if $|a_i|\le 1$. We will prove the equivalence under this assumption.\\
For any $1\le i\le n$ and $1\le j\le n-1$, $|z_i-w_j|\le \max\{|z_i|, |w_j| \}\le \max\{1, |w_j|\}$, so if $I(f)=r_n>1$, then $r_n\le |w_j|$ for any $1\le j\le n-1$. Since 
\[ |n|^{-1}=r_n^{n-1}\le \left|\prod_{j=1}^{n-1}w_j\right|=\left|(-1)^{n-1}\frac{a_1}{n} \right|\le |n|^{-1}, \]
all the inequalities should be equality. Hence $|w_j|=r_n$ for every $1\le j\le n-1$. Also, if $|w_j|=r_n$ for all $1\le j\le n-1$, then trivially $I(f)=r_n$. Hence $f(z)$ satisfies the optimal bound of Theorem 1 if and only If $|w_j|=r_n$ for all $1\le j\le n-1$. 
Now define $h(z) \in K[z]$ as
\[ h(z)=f'(n^{-\frac{1}{n-1}}z)=z^{n-1}+\sum_{j=1}^{n-1}\frac{ja_j}{n^{\frac{j-1}{n-1}}}z^{j-1}.  \]
Then zeros of $h$ are $n^{\frac{1}{n-1}}w_j$, so $|w_j|=r_n$ for all $1\le j\le n-1$ if and only if every zero of $h$ has valuation 1. 
Since $h$ is a monic polynomial, it is easy to see that this occurs exactly when all the coefficients of $h(z)$ are in $\mathcal{O}_K$ and the constant term is in $\mathcal{O}_K^{\times}$, which is equivalent to 
\[ |a_j|\le \left|\frac{n^{\frac{j-1}{n-1}}}{j}  \right| \,\,(2\leq j\leq n), \, |a_1|=1. \]
\end{proof}

\begin{example}
We already saw that the polynomial $f_n(z) = z^n - z$ satisfy the optimal bound, and the coefficients of $f_n$ satisfies \eqref{opt_cond} (note that $v(0) = \infty$). 
\end{example}
\begin{example}
Let $p$ be an odd prime and $f(z) = z^{2p} - p^{-1/2}z^{p}$. Then $f(z)$ fails to satisfy \eqref{opt_cond} since $v(a_p) = -\frac{1}{2} < -\frac{p}{2p-1}$. Indeed, the zeros of $f(z)$ and $f'(z) = 2pz^{2p-1} - p^{1/2}z^{p-1}$ are 
\begin{align*}
    z_{1} = \cdots = z_{p} = 0, &z_{p+1} = p^{-1/2p}, \cdots, z_{2p} = p^{-1/2p} \zeta_{p}^{p-1} \\
    w_1 = \cdots = w_{p-1} = 0, &w_{p} = (4p)^{-1/2p}, \cdots, w_{2p-1} = (4p)^{-1/2p} \zeta_{p}^{p-1}
\end{align*}
and one can check that  $I(f) = p^{1/2p} < r_{2p} = p^{1/(2p-1)}$.
\end{example}

When $|n| = 1$, we express optimality condition as a non-divisibility condition of a reduced polynomial over a residue field of $K$. 
For any elements $g\in \mathcal{O}_K[z]$ (resp. $x\in \mathcal{O}_K$), let $\overline{g}$ (resp. $\overline{x}$) be the corresponding element (mod $\mathfrak{m}_K$ reduction) in $\mathcal{O}_K/\mathfrak{m}_K[z]$ (resp. $\mathcal{O}_K/\mathfrak{m}_K$).
\begin{theorem}
\label{opt_poly_2}
Assume $n=\deg f$ satisfies $|n|=1$. Then $f(z)$ satisfies the optimal bound of the Theorem 1, i.e. $I(f) = |n|^{-1/(n-1)}=1$ if and only if $\overline{f}$ does not divides $\overline{f'}^n$.
\end{theorem}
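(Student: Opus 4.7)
The plan is to translate the optimality $I(f)=1$ into a statement about the reductions of the roots $z_i$ of $f$ and $w_j$ of $f'$ modulo $\mathfrak{m}_K$, and then recast that statement as the divisibility condition.

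The first step exploits $|n|=1$. Since $f$ is monic in $\mathcal{O}_K[z]$ and $n\in\mathcal{O}_K^{\times}$, the polynomial $n^{-1}f'$ is monic and integral, so every zero $w_j$ of $f'$ lies in $\mathcal{O}_K$. Hence $|z_i-w_j|\le 1$ holds automatically, and Theorem~1 gives $I(f)\le 1$. Therefore $I(f)=1$ is equivalent to the existence of some $z_i$ with $|z_i-w_j|=1$ for every $j$, which, since everything lies in $\mathcal{O}_K$, is the same as $\overline{z_i}\ne \overline{w_j}$ in the residue field $k:=\mathcal{O}_K/\mathfrak{m}_K$ for all $j$.

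I would then factor the reductions. Since all $z_i,w_j\in\mathcal{O}_K$,
\[ \overline{f}(z)=\prod_{i=1}^{n}(z-\overline{z_i}),\qquad \overline{f'}(z)=\overline{n}\prod_{j=1}^{n-1}(z-\overline{w_j}) \]
in $k[z]$, so the condition from the previous step reads: some root of $\overline{f}$ in $k$ fails to be a root of $\overline{f'}$.

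The last step is to match this with $\overline{f}\nmid \overline{f'}^n$. Writing $\overline{f}=\prod_\alpha (z-\alpha)^{m_\alpha}$ over the distinct roots $\alpha\in k$ (with $\sum_\alpha m_\alpha=n$, hence $m_\alpha\le n$), the divisibility $\overline{f}\mid \overline{f'}^n$ asks that each $\alpha$ occurs in $\overline{f'}^n$ with multiplicity $\ge m_\alpha$; since the multiplicity of $\alpha$ in $\overline{f'}^n$ is $n$ times its multiplicity in $\overline{f'}$, this reduces to $\alpha$ being a root of $\overline{f'}$ at all. Negating yields the theorem. The main obstacle is precisely this multiplicity bookkeeping: the exponent $n$ is chosen to absorb the multiplicities of $\overline{f}$, and the bound $m_\alpha\le \deg\overline{f}=n$ is what makes the reduction go through cleanly.
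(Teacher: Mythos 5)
Your proof is correct and follows essentially the same route as the paper: reduce to the fact that all $w_j$ lie in $\mathcal{O}_K$, translate the distance condition $|z_i-w_j|=1$ into non-equality of residues, identify the resulting set-containment of roots with $V(\overline{f})\subseteq V(\overline{f'})$, and then unwind the multiplicities to get $\overline{f}\mid\overline{f'}^n$. The only difference is cosmetic — the paper states the criterion via its negation and leaves the final multiplicity bookkeeping as ``by considering the linear factorization,'' whereas you spell it out.
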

\begin{proof}
Since $f'\in \mathcal{O}_K[z]$ and the leading coefficient of $f'$ is an unit of $\mathcal{O}_K$, every zeros of $f'$ are in $\mathcal{O}_K$.\\
For any polynomial $g$, let $V(g)$ be the set of zeros of $g$. By definition, $f$ does not satisfy the optimal bound of Theorem 1 if and only if for every $z\in V(f)$ there exists $w\in Z(f')$ such that $|z-w|<1$. Since $|z-w|<1$ if and only if $\overline{z}=\overline{w}$, this is equivalent to for any $z\in V(f)$ there exist $w\in V(f')$ such that $\overline{z}=\overline{w}$. Since $\{\overline{z},\ :\, z\in V(g) \}=V(\overline{g})$ for any polynomial $g\in \mathcal{O}_K[z]$ with unit leading coefficient, this is equivalent to $V(\overline{f})\subseteq V(\overline{f'})$. By considering the linear factorization of $\overline{f}$ and $\overline{f'}$, this is equivalent to $\overline{f}\mid \overline{f'}^n$.
\end{proof}
By the long division of polynomials, the condition \emph{$\overline{f}$ does not divides $\overline{f'}^n$} can be expressed as a set of inequalities of valuations of certain polynomials whose variables are $a_1,\cdots, a_n$. Hence, in this sense, Theorem 3 has a similar spirit as Theorem 2 (although the criterion is a little more complicated).

\begin{example}
Consider $f(z) = z^{2} + z + \frac{1-p^{2}}{4} = \left(z + \frac{p+1}{2}\right)\left(z - \frac{p-1}{2}\right) \in K[z]$ with $K = \overline{\mathbb{Q}_p}$. 
Then $f'(z) = 2z + 1$ and $f'(z)^{2} = 4z^{2} + 4z + 1 \equiv 4f(z) \,(\mathrm{mod}\,\mathfrak{m}_K)$ so $f'(z)^{2}$ is divisible by $f(z)$ in $\mathcal{O}_K / \mathfrak{m}_K[z]$. 
Indeed, the distances between zeros $z = (-1\pm p)/2$ of $f(z)$ and the zero $w = -1/2$ of $f'(z)$ is $|p| = p^{-1}$, and $I(f) = p^{-1} < 1$. 

\end{example}

\end{document}